\documentclass[12pt]{article}

\usepackage{epsfig}
\usepackage{latexsym}
\usepackage{pdfsync}

\def\Compl {{C\!\!\!\!I}}

\newtheorem{lemma}{Lemma}[section]

\newenvironment{proof}{\noindent{\bf Proof. }}{\hfill$\Box$\medskip}

\newcommand{\eq}{\begin{equation}\begin{array}{rllllllllllllllllllllllllllllllll}}
\newcommand{\ee}{\end{array}\end{equation}}
\newcommand{\bmt}{\left[ \begin{array}{ccccccccc}}
\newcommand{\emt}{\end{array}\right]}
\newcommand{\bea}{\begin{eqnarray}}

\newcommand{\eea}{\end{eqnarray}}
\newcommand{\bean}{\begin{eqnarray*}}
\newcommand{\eean}{\end{eqnarray*}}
\newcommand{\rd}{\rm\  d}

\title{Stochastic HJB Equations\\ and Regular Singular Points}
\author{Arthur J Krener
\thanks{Research supported in part by AFOSR .}
\thanks{A. J. Krener is with the Department of Applied Mathematics, Naval Postgraduate School, Monterey, CA 93943
        {\tt\small ajkrener@nps.edu}}}%

\begin{document}

\date{}

\maketitle

\begin{abstract}
In this paper we show that some HJB equations arising from both finite and infinite horizon stochastic optimal control problems 
have a regular singular point at the origin.  This makes them amenable to solution by power series techniques.   This extends the
work of Al'brecht who showed that the HJB equations of an  infinite horizon deterministic optimal control problem can have a regular singular point
at the origin,  Al'brekht  solved the HJB equations by power series, degree by degree.  In particular, we show that the infinite horizon stochastic optimal control problem
with linear dynamics, quadratic cost and bilinear noise leads to a new type of algebraic Riccati equation which we call the Stochastic 
Algebraic Riccati Equation (SARE).   If SARE can be solved then one has a complete solution   to this infinite horizon stochastic optimal control problem.
We also show that a finite horizon stochastic optimal control problem with linear dynamics, quadratic cost and bilinear noise leads to a Stochastic Differential 
Riccati Equation (SDRE) that is well known.   If these problems are  the linear-quadratic-bilinear part of a nonlinear finite horizon stochastic optimal control problem
then we show how the higher degree terms of the solutions can be computed degree by degree.  To our knowledge this computation is new.
  \end{abstract} 

\section{ Linear Quadratic Regulator with Bilinear Noise}
\setcounter{equation}{0}
Consider an infinite horizon, discounted, stochastic Linear Quadratic Regulator with Bilinear Noise (LQGB),
\bean
\min_{u(\cdot)} {1\over 2} {\rm E}\int_0^\infty e^{-\alpha t}\left(x'Qx+2x'Su+u'Ru\right) \ d t
\eean
subject to
\bean
dx&=& (Fx+Gu)\ dt+ \sum_{k=1}^r (C_{k} x+D_k u )\  dw_k\\
x(0)&=&x^0
\eean
In a previous version of this paper we studied the case with $D_k=0$ \cite{Kr18}.  

The state $x$ is $n$ dimensional, the control $u$ is $m$ dimensional and $w(t)$ is standard $r$ dimensional Brownian motion.
The matrices are sized accordingly, in particular $C_{k}$ is an $n\times n$ matrix and $D_k$ is an $n \times m$ matrix  for each $k=1,\ldots,r$.   The discount factor is $\alpha\ge 0$.

To the best of our knowledge such problems have not been considered before.  The finite horizon version of this problem can be found in Chapter 6 of the excellent treatise by Yong and Zhou \cite{YZ99}.   We will also treat  finite horizon problems in Section \ref{FH} but not in the same generality as Yong and Zhou. Throughout this note we will require that the coefficient of the noise is $O(x,u)$.   Yong and Zhou allow the coefficient to be $O(1)$ in their linear-quadratic problems.  The reason why we require $O(x,u)$ is that then the associated  stochastic Hamilton-Jacobi-Bellman equations for nonlinear extensions of 
LQGB have   regular singular points at the origin.  Hence they are amenable to solution by power series techniques.  If the noise is $O(1)$ these power series techniques have closure problems,
the equations for lower degree terms depend on higher degree terms.   If the coefficients of the noise is $O(x,u)$ then the equations can be solved degree by degree.

A first order partial differential equation with independent variable $x$ has a regular singular point at $x=0$ if the coefficients the first order partial derivatives are $O(x)$.  A second order partial differential equation has a regular singular point at $x=0$ if the coefficients the first order partial derivatives are $O(x)$ and the coefficients the second order partial derivatives are $O(x)^2$.  For more on regular singular points we refer the reader to \cite{BD09}.

If we can find a smooth scalar valued function $\pi(x)$ and a smooth $m$ vector valued $\kappa(x)$ satisfying the discounted stochastic Hamilton-Jacobi-Bellman equations (SHJB)
\bea\label{shjb1}
0&=& \mbox{min}_u \left\{ -\alpha \pi(x) +\frac{\partial \pi}{\partial x}(x) (Fx+Gu)+{1\over 2} \left( x'Qx+2x'Su+u'Ru\right)\right.
\nonumber 
\\
&&\left.  +{1\over 2}\sum_{k=1}^r (x'C'_k+u'D'_k)
 \frac{\partial^2 \pi}{\partial x^2}(x) (C_kx+D_ku) \right\} 
 \\
 \nonumber
 \kappa(x)&=& \mbox{argmin}_u  \left\{ \frac{\partial \pi}{\partial x}(x) (Fx+Gu)+{1\over 2} \left( x'Qx+2x'Su+u'Ru\right)\right.
 \\ &&\left.+{1\over 2}\sum_{k=1}^r (x'C'_k+u'D'_k)
 \frac{\partial^2 \pi}{\partial x^2}(x) (C_kx+D_ku)\right\}  
 \label{shjb2}
 \eea
then by a standard verification argument \cite{FR75} one can show that 
 $\pi(x^0)$ is the optimal cost of starting at $x^0$ and $u(0)=\kappa(x^0)$ is the optimal control at $x^0$.

We make the standard assumptions of deterministic LQR,
\begin{itemize}
\item The matrix
\bean
\bmt Q&S\\S'&R\emt
\eean
is nonnegative definite.
\item The matrix $R$ 
is positive definite.
\item The pair $F$, $G$ is stabilizable.
\item The pair $Q^{1/2}$, $F$ is detectable.
\end{itemize}

Because of the linear dynamics and quadratic cost, 
we expect that $\pi(x) $ is a quadratic function of $x$ and $\kappa(x)$ is a linear function of $x$,
\bean
\pi(x)&=& {1\over 2}x'Px\\
\kappa(x)&=& Kx
\eean
Then the stochastic Hamilton-Jacobi-Bellman equations (\ref{shjb1}, \ref{shjb2}) simplify to
\bea
0&=&-\alpha P +PF+F'P +Q -K'RK\nonumber\\
&&+\sum_{k=1}^r \left(C'_k+K'D'_k\right)P\left(C_k+D_kK\right) \label{sare}
\\
K&=&-\left(R+\sum_{k=1}^rD'_kPD_k\right)^{-1}\left(G'P+S'\right) \label{K}
\eea
We call these equations  (\ref{sare}, \ref{K}) the Stochastic Algebraic Riccati Equations (SARE).
They reduce to the  deterministic Algebraic Riccati Equations (ARE)  if $C_k=0$ and $D_k=0$.

Here is an iterative method for solving SARE.  Let $P_{(0)}$ be the solution  of the first deterministic ARE 
 \bean
0&=& -\alpha P_{(0)}+ P_{(0)}F+F'P_{(0)}+Q-(P_{(0)}G+S)R^{-1}(G'P_{(0)}+S') 
\eean
and $K_{(0)}$ be solution of the second  deterministic ARE 
\bean
K_{(0)}&=&-R^{-1}(G'P+S')
\eean

Given $P_{(\tau-1)}$  define
\bean
Q_{(\tau)}&=& Q+\sum_{k=1}^r C'_k P_{(\tau-1)}C_k\\
R_{(\tau)}&=& R+\sum_{k=1}^r D'_k P_{(\tau-1)}D_k\\
S_{(\tau)}&=& S+\sum_{k=1}^r C'_k P_{(\tau-1)}D_k
\eean

Let
 $P_{(\tau)}$ be the solution  of
\bean
0&=& -\alpha P_{(\tau)} +P_{(\tau)}F+F'P_{(\tau)}+Q_{(\tau)}-(P_{(\tau)}G+S_{(\tau)})R_{(\tau)}^{-1}(G'P_{(\tau)}+S'_{(\tau)})
\eean
and
\bean
K_{(\tau)}&=&-R_{(\tau)}^{-1}\left(G'P_{(\tau)}+S_{(\tau)}'\right) 
\eean

If the iteration on $P_{(\tau)}$  nearly converges, that is, for some $\tau$, $P_{(\tau)}\approx P_{(\tau-1)}$ then $P_{(\tau)}$ and $ K_{(\tau)}$ are approximate solutions to SARE

The solution $P$ of the deterministic ARE is the kernel of the optimal cost of a deterministic LQR and since   
\bean
\bmt Q& S\\ S'& R\emt \le \bmt Q_{(\tau-1)}& S_{(\tau-1)}\\S'_{(\tau-1)}& R_{(\tau-1)}\emt \le \bmt Q_{(\tau)}& S_{(\tau)}\\S'_{(\tau)}& R_{(\tau)}\emt
\eean
it follows that $P_{(0)}\le P_{(\tau-1)} \le P_{(\tau)} $,  the iteration is monotonically increasing.
  We have found computationally that  if matrices $C_k$ and $D_k$ are not too big then the iteration conveges.  But if the $C_k$ and $D_k$ are about the same size as $F$ and $G$ or larger the iteration can diverge.    Further study of this issue is needed.   The iteration does converge in the following simple example.

\section{LQGB Example}
\setcounter{equation}{0}
Here is a simple example with $n=2,m=1,r=2$.
\bean
\min_u {1\over 2}\int_0^\infty \|x\|^2+u^2\ dt
\eean
subject to 
\bean
dx_1&=& x_2\ dt+0.1 x_1 \ dw_1\\
dx_2&=&u\ dt+0.1 (x_2 +u)\ dw_2
\eean
In other words 
\bean
Q=\bmt 1&0\\0&1\emt,& S=\bmt 0\\1\emt, &R=1\\
F= \bmt 0&1\\0&0\emt,& G=\bmt 0\\1\emt& \\
C_1=\bmt 0.1&0\\0&0\emt, &C_2=\bmt 0&0\\0&0.1\emt&\\
D_1=\bmt 0\\0\emt, &D_2=\bmt 0\\0.1\emt&
\eean

The solution of the noiseless ARE is
\bean
P&=& \bmt 1.7321 & 1.000\\1.000&1.7321\emt
\\
K&=&-\bmt 1.0000& 1.7321\emt
\eean
The eigenvalues of the noiseless  closed loop matrix $F+GK$ are $-0.8660\pm0.5000i $.

The above iteration converges to the solution of the noisy SARE in eight iterations, the solution is
\bean
P&=& \bmt 1.7625 & 1.0176\\1.0176&1.7524\emt\\
\\
K&=&-\bmt 1.0176&1.7524\emt
\eean
The eigenvalues of the noisy closed loop matrix $F+GK$ are $-0.8762\pm 0.4999i$.

As expected the noisy system is more difficult to control than the noiseless system.  It should be noted that the above iteration diverged to infinity
when the noise coefficients were increased from $0.1$ to $1$.

\section{Nonlinear Infinite Horizon HJB}
Suppose the problem is not linear-quadratic,  the dynamics is given by an Ito equation
\bean
dx&=& f(x,u) \ dt +\sum_{k=1}^r\gamma_k(x,u) \ dw_k
\eean
 and the criterion to be minimized is
 \bean
\min_{u(\cdot)} {\rm E}\int_0^\infty e^{-\alpha t}l(x,u)\ d t
\eean

We assume that $f(x,u), \gamma_k(x,u), l(x,u) $ are smooth functions that have Taylor polynomial expansions
around $x=0,u=0$,
\bean
f(x,u)&=& Fx+Gu+f^{[2]}(x,u)+\ldots+f^{[d]}(x,u)+O(x,u)^{d+1}\\
\gamma_k(x,u)&=& C_kx+D_ku+\gamma_k^{[2]}(x,u)+\ldots+\gamma_k^{[d]}(x,u)+O(x)^{d+1}\\
l(x,u)&=&{1\over 2}\left(x'Qx+2x'Su+u'Ru\right) +l^{[3]}(x,u)+\ldots+l^{[d+1]}(x,u)+O(x,u)^{d+2}
\eean
where $^{[d]}$ indicates the homogeneous polynomial terms of degree $d$.

  Then the the discounted stochastic Hamilton-Jacobi-Bellman equations become
  \bea
0&=& \mbox{min}_u \left\{ -\alpha \pi(x) +\frac{\partial \pi}{\partial x}(x) f(x,u)+l(x,u)\right. \label{shjb3}
\nonumber 
\\
&&\left.  +{1\over 2}\sum_{k=1}^r \gamma'_k(x,u)
 \frac{\partial^2 \pi}{\partial x^2}(x) \gamma_k(x,u)\right\} 
 \\
 \kappa(x)&=& \mbox{argmin}_u    \left\{ -\alpha \pi(x) +\frac{\partial \pi}{\partial x}(x) f(x,u)+l(x,u)\right. \nonumber
\\   \label{shjb4}
&&\left.  +{1\over 2}\sum_{k=1}^r \gamma'_k(x,u)
 \frac{\partial^2 \pi}{\partial x^2}(x) \gamma_k(x,u)\right\} 
 \eea
 
 If the control enters the dynamics affinely,
\bean
f(x,u)&=& f^0(x) +f^u(x)u\\
\gamma_k(x,u)&=&\gamma^0_k(x)+\gamma^u_k(x)u
\eean
 and $l(x,u)$ is always strictly convex in $u$ for every $x$ then the quantity to be minimized in (\ref{shjb3}) is strictly convex in $u$.

If we assume that (\ref{shjb3})
 is strictly convex in $u$  then the HJB  equations  (\ref{shjb3}, \ref{shjb4}) simplify to
\bea
0&=&-\alpha \pi(x) +\frac{\partial \pi}{\partial x}(x) f(x,\kappa(x))+l(x,\kappa(x))\label{shjb5} \\
&& \nonumber+{1\over 2}\sum_{k=1}^r \gamma'_k(x,\kappa(x))
 \frac{\partial^2 \pi}{\partial x^2}(x) \gamma_k(x,\kappa(x)) 
 \\
 0&=&\frac{\partial \pi}{\partial x}(x) \frac{\partial f}{\partial u}(x,\kappa(x))+\frac{\partial l}{\partial u}(x,\kappa(x))
 \label{shjb6}
 \\&& +\sum_{k=1}^r \gamma'_k(x,\kappa(x))
 \frac{\partial^2 \pi}{\partial x^2}(x) \frac{\partial \gamma_k}{\partial u}(x,\kappa(x)) \nonumber
\eea

Because $f(x,u)=O(x,u)$ and $\gamma_k(x,u)=O(x,u)$, (\ref{shjb5}) has a regular singular point at $x=0,u=0$ and so is 
amenable to power series solution techniques. If $\gamma_k(x,u)=O(1)$ then there is persistent noise
that must be overcome by persistent control action.  Presumably then the optimal cost is infinite.

  Following Al'brekht \cite{Al61} we assume that the optimal cost $\pi(x)$ and the optimal
feedback have Taylor polynomial expansions
\bean
\pi(x)&=& {1\over 2}x'Px +\pi^{[3]}(x)+\ldots+\pi^{[d+1]}(x)+O(x)^{d+2}\\
\kappa(x)&=&Kx+\kappa^{[2]}(x)+\ldots+\kappa^{[d]}(x)+O(x)^{d+1}
\eean
We  plug all these expansions into the simplified SHJB equations (\ref{shjb5}, \ref{shjb6}). 
At lowest degrees, degree two in (\ref{shjb5}) and degree one in (\ref{shjb6}) we get the familiar SARE
 (\ref{sare}, за\ref{K}).

If  (\ref{sare}, \ref{K}) are solvable 
then  we may proceed to the next degrees, degree three in (\ref{shjb5})  and degree two in (\ref{shjb6}).  
\bea
0&=&\frac{\partial \pi^{[3]}}{\partial x}(x) (F+GK)x+x'Pf^{[2]}(x,Kx)+l^{[3]}(x,Kx) \label{shjb7}\\&&
+{1\over 2} \sum_k x'(C'_k +K'D'_k) \frac{\partial^2 \pi^{[3]}}{\partial x^2}(x) (C_k+D_kK) x \nonumber
\\&&+\sum_k x'(C'_k +K'D_k)P\gamma_k^{[2]}(x,Kx)\nonumber \\  \nonumber
\\
0&=& \frac{\partial \pi^{[3]}}{\partial x}(x) G +x'P\frac{\partial f^{[2]}}{\partial u}(x,Kx)+\frac{\partial l^{[3]}}{\partial u}(x,Kx)
 \label{shjb8}
 \\&&+\sum_k x'(C_k+D_kK)'\left(P\frac{\partial \gamma^{[2]}_k}{\partial u}(x,Kx)+ \frac{\partial^2 \pi^{[3]}}{\partial x^2}(x) D_k\right) \nonumber
\\&& +\sum_k \gamma^{[2]}_k(x,Kx)PD_k \nonumber 
 +(\kappa^{[2]}(x))'\left(R+\sum_k D'_kPD_k\right)  \nonumber
 \eea
Notice the first equation (\ref{shjb7})  is a square linear equation for the  unknown $\pi^{[3]}(x)$,
the other unknown $\kappa^{[2]}(x)$ does not appear in it.
 If we can solve the first equation (\ref{shjb7})  for $\pi^{[3]}(x)$
 then we can solve the second equation (\ref{shjb8})  for $\kappa^{[2]}(x)$ because of the standard assumption that $R$ is invertible so $R+\sum_k D_kPD_k$
 must also be invertible.
 
 In the deterministic case the eigenvalues of the linear operator
 \bea \label{dop}
 \pi^{[3]}(x) &\mapsto& \frac{\partial \pi^{[3]}}{\partial x}(x) (F+GK)x
 \eea 
 are the sums of three eigenvalues of $F+GK$.  Under the standard LQR assumptions all the  eigenvalues of $F+GK$ are in the open left half plane so any
 sum of three eigenvalues of $F+GK$ is different from zero and the operator (\ref{dop}) is invertible.

  In the stochastic case the relevant linear operator is a sum of two operators
 \bea \label{sop}
 \pi^{[3]}(x) &\mapsto& \frac{\partial \pi^{[3]}}{\partial x}(x) (F+GK)x \\&&
 +{1\over 2} \sum_k x'(C'_k +K'D'_k) \frac{\partial^2 \pi^{[3]}}{\partial x^2}(x) (C_k+D_kK) x \nonumber
 \eea 
 
 Consider a simple version of the second  operator, for some $C$,
  \bea \label{sim} \pi^{[3]}(x) &\mapsto&{1\over 2} x'C'\frac{\partial^2 \pi^{[3]}}{\partial x^2}(x)Cx
 \eea
 Suppose $C$ has a complete set of left eigenpairs, $\lambda_i\in \Compl,\  w^i\in \Compl^{1\times n}$ for $i=1,\ldots,n$,
 \bean
 w^i C&=& \lambda_i w^i
 \eean
 Then the eigenvalues of (\ref{sim}) are of the form
$
 \lambda_{i_1}\lambda_{i_2}+\lambda_{i_2}\lambda_{i_3}+ \lambda_{i_3}\lambda_{i_1}
$
and the corresponding eigenvectors are
 $(w^{i_1}x)(w^{i_2}x)(w^{i_3}x)$
 for  for $ 1\le i_1\le  i_2\le i_3$. 
 But this analysis does not completely clarify whether the operator (\ref{sop}) is invertible.  Here is one case where it is known to be invertible.
 
 Consider the space of cubic polynomials $\pi(x)$.   We can norm this space using the standard $L_2$ norm on the vector of coefficients of $\pi(x)$ which we denote by $\|\pi(x)\|$.   Then there is an induced  norm on operators like (\ref{dop}), (\ref{sop}) and
 \bea  \label{pop}
  \pi^{[3]}(x) &\mapsto& {1\over 2} \sum_k x'(C'_k +K'D_k) \frac{\partial^2 \pi^{[3]}}{\partial x^2}(x) (C_k+D_kK) x \nonumber
 \eea 
Since the operator (\ref{dop}) is invertible its inverse has an operator norm  $\rho<\infty$.   If all the eigenvalues of $F+GK $ have real parts less that $-\tau$ then 
${ 1 \over \rho} \ge 3\tau$.
Let $\sigma$ be the supremum operator norms of $C_k+D_kK$ for $k=1.\ldots, r$.   Then from the discussion above we know that the operator norm of (\ref{pop}) is bounded above by ${3r\sigma^2\over 2}$

 \begin{lemma}
If $\tau > {r\sigma^2 \over 2}$ then
 the operator (\ref{sop}) is invertible.
 \end{lemma}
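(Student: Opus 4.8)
The plan is to regard the full operator (\ref{sop}) as a perturbation of the purely deterministic operator (\ref{dop}) and to invert it by a Neumann series. Write $A$ for the operator (\ref{dop}) and $B$ for the operator (\ref{pop}), so that (\ref{sop}) is exactly $A+B$ acting on the finite dimensional space of cubic polynomials $\pi^{[3]}(x)$; one checks that each of $A$ and $B$ does map this space into itself, since the gradient of a cubic is quadratic and the Hessian is linear, so both expressions are again homogeneous of degree three. Under the standard LQR assumptions $A$ is invertible, and the discussion preceding the statement has already recorded the two estimates I need, namely $\|A^{-1}\|=\rho\le 1/(3\tau)$ and $\|B\|\le 3r\sigma^2/2$.

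First I would factor $A+B = A(I + A^{-1}B)$, which reduces the invertibility of $A+B$ to that of $I + A^{-1}B$. Next I would bound the induced operator norm of $A^{-1}B$ submultiplicatively,
\[
\|A^{-1}B\| \;\le\; \|A^{-1}\|\,\|B\| \;\le\; \frac{1}{3\tau}\cdot\frac{3r\sigma^2}{2} \;=\; \frac{r\sigma^2}{2\tau}.
\]
The hypothesis $\tau > r\sigma^2/2$ is precisely what forces this quantity to be strictly less than one, so $\|A^{-1}B\|<1$.

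With $\|A^{-1}B\|<1$ in hand, the Neumann series $\sum_{j=0}^{\infty}(-A^{-1}B)^{j}$ converges in the induced operator norm on this finite dimensional space and furnishes a bounded inverse for $I + A^{-1}B$. Hence $A+B = A(I+A^{-1}B)$ is a composition of two invertible operators and is therefore itself invertible, which is the assertion of the lemma. I do not expect a genuine obstacle here: all the substantive work, establishing invertibility of (\ref{dop}), the eigenvalue bound $1/\rho\ge 3\tau$, and the bound $3r\sigma^2/2$ on the norm of (\ref{pop}), has been done in the text above, so the only remaining step is to verify that the constants combine into a contraction, which they do exactly when $\tau > r\sigma^2/2$.
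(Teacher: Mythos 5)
Your proof is correct, but it takes a genuinely different route from the paper's. The paper argues by contradiction, exploiting finite dimensionality: if (\ref{sop}) were not invertible there would be a nonzero cubic $\pi$ in its kernel, so the deterministic and stochastic terms would have to agree in norm,
$\left\| \frac{\partial \pi}{\partial x}(x)(F+GK)x \right\| = \left\| {1\over 2}\sum_k x'(C'_k+K'D'_k)\frac{\partial^2 \pi}{\partial x^2}(x)(C_k+D_kK)x \right\|$;
but the left side is at least $3\tau\|\pi(x)\|$ while the right side is at most ${3r\sigma^2 \over 2}\|\pi(x)\|$, impossible when $\tau > {r\sigma^2 \over 2}$ and $\pi \ne 0$. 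You instead factor the operator as $A(I+A^{-1}B)$ and invert $I+A^{-1}B$ by a Neumann series, using the same two estimates from the text ($\rho \le 1/(3\tau)$ and $\|B\| \le 3r\sigma^2/2$) to verify the contraction condition $\|A^{-1}B\| < 1$. Both arguments are valid and both lean entirely on the bounds already established before the lemma. The paper's version is shorter and needs nothing beyond the fact that on a finite dimensional space injectivity implies invertibility; yours buys more: it is constructive, it does not depend on finite dimensionality, and it yields the quantitative bound $\|(A+B)^{-1}\| \le \rho/(1 - r\sigma^2/(2\tau))$. There is also a payoff the paper does not cash in: your Neumann iteration is exactly the iterative scheme the paper proposes afterwards for solving (\ref{shjb7}) --- solve the deterministic equation, feed the stochastic correction back into the right-hand side, repeat --- so your argument doubles as a proof that that scheme converges under the hypothesis of the lemma, whereas the paper only says ``if this iteration converges then we have the solution.''
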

 \begin{proof}
 Suppose (\ref{sop}) is not invertible then there exist a cubic polynomial $\pi(x)\ne 0$ such that
 \bean
 \frac{\partial \pi^{[3]}}{\partial x}(x) (F+GK)x &=&
 -{1\over 2} \sum_k x'(C'_k +K'D_k) \frac{\partial^2 \pi^{[3]}}{\partial x^2}(x) (C_k+D_kK) x \eean
 so 
 \bean
  \left\| \frac{\partial \pi^{[3]}}{\partial x}(x) (F+GK)x \right\|=\left\| {1\over 2} \sum_k x'(C'_k +K'D_k) \frac{\partial^2 \pi^{[3]}}{\partial x^2}(x) (C_k+D_kK) x \right\|
 \eean
 But we know that 
 \bean
 \left\| \frac{\partial \pi^{[3]}}{\partial x}(x) (F+GK)x \right\|\ge{1\over \rho}\|\pi(x)\|\ge 3\tau \|\pi(x)\|>{3r\sigma^2 \over 2}\|\pi(x)\|
 \eean
 while
 \bean
 \left\| {1\over 2} \sum_k x'(C'_k +K'D_k) \frac{\partial^2 \pi^{[3]}}{\partial x^2}(x) (C_k+D_kK) x \right\|\le{3r\sigma^2 \over 2}\|\pi(x)\|
 \eean
  \end{proof}
 
 The takeaway message from this lemma is that if the nonzero entries of $C_k, D_k$ are small relative to the nonzero entries of $F, G$ then  we can expect that (\ref{sop})  will be invertible.
 
 There are two ways to try solve (\ref{shjb7}), the iterative approach or the direct approach .   
 We have written Matlab software to solve  the deterministic version of these equations.  This suggests an iteration scheme similar to the above for solving SARE. Let $\pi^{[3]}_{(0)}$
 be the solution of the deteministic version of (\ref{shjb7}) where $C_k=0,\ D_k=0$.
Given $\pi^{[3]}_{(\tau-1)}(x)$  define 
\bean
l^{[3]}_{(\tau)}(x,u)&=&l^{[3]}(x,u)+{1\over 2} \sum_k x'(C'_k +K'D_k) \frac{\partial^2 \pi^{[3]}_{\tau-1}}{\partial x^2}(x) (C_k+D_kK) x \nonumber
\\&&+\sum_k x'(C'_k +K'D_k)P\gamma_k^{[2]}(x,u)\nonumber
\eean
 and let $\pi^{[3]}_{(\tau)}$ be the solution of 
 \bean
0&=&\frac{\partial \pi^{[3]}_{(\tau)}}{\partial x}(x) (F+GK)x+x'Pf^{[2]}(x,Kx)+l^{[3]}_{(\tau)}(x,Kx) 
 \eean 
  If this iteration converges then we have the solution to (\ref{shjb7}).
  We have also written Matlab software to solve (\ref{shjb7}) directly assuming the operator (\ref{sop}) is invertible.  

 If (\ref{shjb7}) is solvable then solving (\ref{shjb8})  for $\kappa^{[2]}(x)$ is straightforward
as we have assumed that $R$ is invertible.
 If these equations are solvable then we can move on to the equations for $\pi^{[4]}(x)$ and $\kappa^{[3]}(x)$ and higher degrees.

 It should be noted that if the Lagrangian is an even function  and the dynamics is an odd function then the optimal cost $\pi(x)$ is 
  an even function and the optimal feedback $\kappa(x)$ is an odd function.

\section{Nonlinear Example}
\setcounter{equation}{0}
Here is a simple example with $n=2,m=1,r=1$.  Consider a pendulum of length $1\ m$ and mass $1\ kg$ orbiting approximately 400 kilometers
above Earth on the International Space Station (ISS).   The "gravity constant" at this height is approximately $g=8.7\ m/sec^2$.  The pendulum can be controlled
by a torque $u$ that can be applied at the pivot and there is damping at the pivot with  linear damping constant $c=0.1\ kg/sec$ and  cubic damping constant $c_3= 0.05\ kg\ sec/m^2$.   Let $x_1$ denote the angle of pendulum measured counter clockwise from the outward pointing ray from the center of the Earth and let $x_2$ denote the angular velocity.  The determistic equations of motion are
\bean
\dot{x}_1&=& x_2
\\
\dot{x}_2&=& lg\sin x_1 -c_1 x_2-c_3 x_2^3 +u
\eean
But the shape of the earth is not a perfect sphere and its density  is not uniform  so there are fluctuations in the "gravity constant".  We set these fluctuations in the "gravity constant" at one percent although they are probably smaller.  There might also be fluctuations in the damping constants of around one percent.   Further assume that
the commanded torque is not always realized and the relative error in the actual torque  fluctuates around one percent.  
We model these stochastically by three  white noises 
\bean
dx_1&=& x_2\ dt\\
dx_2&=&\left(lg\sin x_1 -c_1x_2-c_3x_2^3+u\right)\ dt\\
&&+0.01 lg\sin x_1 \ dw_1- 0.01(c_1 x_2+c_3x_2^3)\ dw_2 +0.01u\ dw_3
\eean

This is  an example about how stochastic models with noise coefficients of order $O(x)$ can arise.   If the noise is modeling an uncertain  environment then its coefficients are likely  to be  $O(1)$.  But if it is  the  model  that is  uncetain then noise coefficients are likely to be $O(x)$. 

The goal is to find a feedback $u=\kappa(x)$ that stabilizes the pendulum to straight up in spite of the noises so we take the criterion to be 
\bean
\min_u {1\over 2}\int_0^\infty \|x\|^2+u^2\ dt
\eean
with discount factor is $\alpha=0$.

Then 
\bean
F=\bmt 0&1\\8.7&0.1\emt,& G=\bmt 0\\1\emt,& \\
Q=\bmt 1&0\\0&1\emt, & R=1,& S=\bmt 0\\0\emt\\
C_1=\bmt 0&0\\0.087&0\emt,&C_2=\bmt 0&0\\0&-0.001\emt,& C_3=\bmt 0&0\\0&0\emt\\
D_1=\bmt 0\\0\emt,& D_2 =\bmt 0\\0\emt,& D_3 =\bmt 0\\0.01\emt
\eean

Because the Lagrangian is an even function  and the dynamics is an odd function of $x,u$, we know that
$\pi(x)$ is an even function  of $x$ and $\kappa(x)$ s an odd function of $x$.   

We have computed the optimal cost $\pi(x)$ to degree $6$ and the optimal feedback $\kappa(x)$ to degree $5$,
\bean
\pi(x)&=&26.7042x_1^2+   17.4701x_1x_2    2.9488x_2^2\\&&
-4.6153x_1^4   -2.9012x_1^3x_2   -0.5535x_1^2x_2^2   -0.0802 x_1x_2^3  -0.0157x_2^4\\
&&
0.3361x_1^6+    0.1468x_1^5x_2   -0.0015x_1^4x_2^2   -0.0077x_1^3x_2^3 \\
&&  -0.0022x_1^2x_2^4   -0.0003x_1x_2^5    +0.0000x_2^6
\\
\kappa(x)&=&-17.4598x_1   -5.8941x_2 \\
&& +2.9012x_1^3+    1.1071x_1^2x_2+    0.2405x_1x_2^2+    0.0628x_2^3\\
&&
-0.1468x_1^5+   0.0031x_1^4x_2+    0.0232x_1^3x_2^2\\&&+    0.0089x_1^2x_2^3+    0.0014x_1x_2^4   -0.0002x_2^5
\eean

In making this computation we are approximating $\sin x_1$ by its Taylor polynomials 
\bean
\sin x_1&=& x_1-{x_1^3\over 6} +{x_1^5 \over 120}+\ldots
\eean
The alternating signs of the odd terms in these polynomials are reflected in the nearly  alternating signs in the Taylor polynomials of the optimal cost $\pi(x)$
and optimal feedback $\kappa(x)$.  If we take a first degree approximation to $\sin x_1$ we are overestimating the gravitational force 
pulling the pendulum from its upright position pointing so $\pi^{[2}(x)$ overestimates the optimal cost
and the feedback $u=\kappa^{[1]}(x)$ is stronger than it needs to be.  The latter could be a problem if there is a bound on the magnitude of $u$ that we ignored in the analysis.
If we take a third degree approximation to $\sin x_1$ then $\pi^{[2]}(x)+\pi^{[4]}(x)$ underestimates the optimal cost
and the feedback $u=\kappa^{[1]}(x)+\kappa^{[3]}(x)$ is weaker than it needs to be.
If we take a fifth degree approximation to $\sin x_1$ then $\pi^{[2]}(x)+\pi^{[4]}(x)+\pi^{[6]}(x)$ overestimates the optimal cost but by a smaller margin than 
$\pi^{[2}(x)$.  The feedback     $u=\kappa^{[1]}(x)+\kappa^{[3]}(x)+\kappa^{[5]}(x)$  is stronger than it needs to be
but by a smaller margin than $u=\kappa^{[1]}(x)$.

\section{Finite Horizon Stochastic Nonlinear Optimal Control  Problem} \label{FH}
\setcounter{equation}{0}
Consider the finite horizon stochastic nonlinear optimal control  problem,
\bean
\min_{u(\cdot)}  {\rm E}\left\{ \int_0^T l(t,x,u) \rd t+\pi_T(x(T))\right\}
\eean
subject to 
\bean
d x&=& f(t,x,u)dt+\sum_{k=1}^r\gamma_k(t,x,u)d w_k\\
x(0)&=&x^0
\eean
Again we assume that $ f, l,\gamma_k, \pi_T$ are sufficiently smooth.

If they exist and are smooth the optimal cost $\pi(t, x) $ of starting at $x$ at time $t$ and the optimal feedback
$u(t)=\kappa(t,x(t))$ satisfy the time dependent  Hamilton-Jacobi-Bellman equations (HJB) 
\bean
0&=& \mbox{min}_u \left\{ \frac{\partial \pi}{\partial t}(t,x) +\frac{\partial \pi}{\partial x}(t,x) f(t,x,u) 
+l(t,x,u)\right. \\&&\left. +{1\over 2}\sum_{l=1}^k \gamma'_k(t,x,u)
 \frac{\partial^2 \pi}{\partial x^2}(t,x) \gamma_k(t,x,u) \right\}
 \\
 0&=& \mbox{argmin}_u \left\{ \sum_i\frac{\partial \pi}{\partial x_i}(t,x) f_i(t,x,u) +l(t,x,u)\right. 
 \\&&\left. +{1\over 2}\sum_{l=1}^k \gamma'_k(t,x,u)
 \frac{\partial^2 \pi}{\partial x^2}(t,x) \gamma_k(t,x,u) \right\}
  \eean

If the quantity to be minimized
is strictly convex in $u$ then HJB equations simplify to
\bea \nonumber
0&=&  \frac{\partial \pi}{\partial t}(t,x) + \sum_i\frac{\partial \pi}{\partial x_i}(t,x) f_i(t,x,\kappa(x)) +l(t,x,\kappa(x)) 
\\&&+{1\over 2} \sum_{k=1}^r
\gamma'_k(t,x,\kappa(x)) \frac{\partial^2 \pi}{\partial x^2}(t,x) \gamma_k(t,x,\kappa(x)) \label{hjb1t}\\
\nonumber
 \\
 \label{hjb2t}
 0&=& \sum_{i,k} \frac{\partial \pi}{\partial x_i}(x)  \frac{\partial f _i}{\partial u_k}(t,x,\kappa(x)) +\sum_k  \frac{\partial l }{\partial u_k}(t,x,\kappa(x)) 
 \\&& +\sum_{k=1}^r \gamma'_k(t,x,\kappa(x))
 \frac{\partial^2 \pi}{\partial x^2}(x) \frac{\partial \gamma_k}{\partial u}(t,x,\kappa(x)) \nonumber
\eea
These equations are integrated backward in time from the
 final condition
\bea \label{hjbT}
\pi(T,x)&=& \pi_T(x)
\eea

Again we assume that we have the  following Taylor expansions
\bean
f(t,x,u)&=& F(t)x+G(t)u+f^{[2]}(t,x,u)+f^{[3]}(t,x,u)+\ldots\\
l(t,x,u)&=& {1\over 2}\left( x'Q(t)x+u'R(t)u\right)+l^{[3]}(t,x,u)+l^{[4]}(t,x,u)+\ldots\\
\gamma_k(t,x)&=& C_k(t)x+\gamma_k^{[2]}(t,x)+\beta_{k}^{[3]}(t,x)+\ldots\\
\pi_T(x)&=& {1\over 2} x'P_Tx+\pi_T^{[3]}(x)+\pi_T^{[4]}(x)+\ldots\\
\pi(t,x)&=& {1\over 2} x'P(t)x+\pi^{[3]}(t,x)+\pi^{[4]}(t,x)+\ldots\\
\kappa(t,x)&=& K(t)x+\kappa^{[2]}(t,x)+\kappa^{[3]}(t,x)+\ldots
\eean
where $^{[r]}$ indicates terms of homogeneous degree $r$ in $x,u$ with coefficients that are continuous  functions of $t$.

The key assumption is that $\gamma_k(t,0)=0$
for then (\ref{hjb1t}) has a regular singular point at $x=0$ and so is amenable to power series methods.

We plug these expansions into the simplified time dependent HJB equations and collect terms of lowest degree, that is, degree  two in (\ref{hjb1t}), degree one in (\ref{hjb2t}) and degree two in (\ref{hjbT}).
\bean
0&=& \dot{P}(t)+P(t)F(t)+F'(t)P(t)+Q(t)-K'(t)R(t)K(t)\\
&&
 +\sum_k \left(C'_k(t)+K'(t)D'_k(t)\right)P(t)\left(C_k(t)+ D_k(t)K(t)\right) \\
K(t)&=& -\left(R(t)+\sum_{k=1}^rD'_k(t)P(t)D_k(t)\right)^{-1} (G'(t) P(t)+S(t))\\
P(T)&=& P_T
\eean
We call these equations the stochastic differential Riccati equation (SDRE).     Similar equations in more generality can be found in \cite{YZ99} but since we are interested in nonlinear problems 
 we require that $\gamma_k(t,x)=O(x)$  so that the stochastic HJB equations have a regular singular at the origin.

If SDRE are solvable we may proceed to the next degrees, degree three in (\ref{hjb1t}),  and degree two  in (\ref{hjbT}).
\bean
0&=&\frac{\partial  \pi^{[3]}}{\partial t}(t,x)+ \frac{\partial \pi^{[3]}}{\partial x}(t,x) (F(t)+G(t)K(t))x\\
&&+x'P(t)f^{[2]}(t,x,K(t)x)+l^{[3]}(t,x,Kx)\\&&
\\&&
 +{1\over 2}\sum_k x'C'_k(t)  \frac{\partial^2 \pi^{[3]}}{\partial x^2}(t,x) \left(C_k+D_k(t)K(t)\right)(t) x\\
 &&
 +\sum_k x'\left(C'_k(t)+K'(t)D'_k(t)\right)P(t)\gamma_k^{[2]}(t,x)\\
\\
0&=& \frac{\partial \pi^{[3]}}{\partial x}(t,x) G(t) +x'P(t)\frac{\partial f^{[2]}}{\partial u}(t,x,K(t)x)+\frac{\partial l^{[3]}}{\partial u}(t,x,K(t)x)
 \\&+&\sum_k x'(C_k(t)+D_k(t)K(t))'\left(P(t)\frac{\partial \gamma^{[2]}_k}{\partial u}(x,K(t)x)+ \frac{\partial^2 \pi^{[3]}}{\partial x^2}(x) D_k(t)\right) \nonumber
\\&+& \sum_k \gamma^{[2]}_k(x,K(t)x)P(t)D_k(t)
 +(\kappa^{[2]}(t,x))'\left(R(t)+\sum_k D'_k(t)PD_k(t)\right) \\ \nonumber
 \eean

Notice again the unknown $\kappa^{[2]}(t,x)$ does not appear in the first equation which is linear ode for  
$ \pi^{[3]}(t,x)$ running backward in time from the terminal condition,
\bean
\pi^{[3]}(t,x)&=& \pi^{[3]}_T(x)
\eean  
After we have  solved it then the second equation for $\kappa^{[2]}(t,x)$ is easily solved  because of the standard assumption that $R(t)$ is invertible and hence $R(t)+\sum_k D'_k(t)PD_k(t)$ is invertible.  
 
 The higher degree terms can be found in a similar fashion.

\section{Acknowledgements}
The author would like to thank Mark Davis, Wendell Fleming, Alan Laub, George Yin and especially Peter Caines for their helpful comments.

\end{document}